\documentclass[10pt, letterpaper]{article}
\usepackage{fullpage}
\usepackage{amsmath, amssymb,amsthm}

\newtheorem{theorem}{Theorem}[section]

\newtheorem{lemma}[theorem]{Lemma}
\newtheorem{proposition}[theorem]{Proposition}
\newtheorem{corollary}[theorem]{Corollary}
\numberwithin{equation}{section}

\newenvironment{definition}[1][Definition]{\begin{trivlist}
\item[\hskip \labelsep {\bfseries #1}]}{\end{trivlist}}

\newenvironment{remark}[1][Remark]{\begin{trivlist}
\item[\hskip \labelsep {\bfseries #1}]}{\end{trivlist}}

\def\P{{\mathbb P}}        % probability
\def\E{{\mathbb E}}        % expectation
\def\Z{{\mathbb Z}}         % integers
       % complex numbers
\def\F{{\cal F}}                 % sigma field
\def\N{{\mathbb N}}       % naturals
\def\R{{\mathbb R}}       % reals
\def\1{{\mathbf 1}}         % indicator
    % variance
               % finite set
               % ditto
\def\B{{\cal B}}               % ditto
        % ditto
                  %index set
\def\Var{{\mathbf {Var}\,}}    % variance

\title{An $L^1$ Ergodic Theorem for Sparse Random Subsequences}
\author{Patrick LaVictoire}

\begin{document}
\maketitle
\begin{abstract}
We prove an $L^1$ subsequence ergodic theorem for sequences chosen by independent random selector variables, thereby showing the existence of universally $L^1$-good sequences nearly as sparse as the set of squares.  In the process, we prove that a certain deterministic condition implies a weak maximal inequality for a sequence of $\ell^1$ convolution operators (Prop. \ref{shine on}).
\end{abstract}
\section{Introduction}
Let $(X,\F,m)$ be a non-atomic probability space and $T$ a measure-preserving transformation on $X$; we call $(X,\F,m,T)$ a dynamical system.  For a sequence of integers ${\mathfrak n}=\{n_k\}$ and any $f\in L^1(X)$, we may define the subsequence average
$$A_N^{({\mathfrak n})}f(x):= \frac1N \sum_{k=1}^N f(T^{n_k}x).$$
Given a sequence ${\mathfrak n}$, a major question is for which $1\leq p\leq\infty$ and which $(X,\F,m,T)$ we have convergence of various sorts for all $f\in L^p(X)$.  An important definition along these lines is as follows:
\begin{definition}
A sequence of integers ${\mathfrak n}=\{n_k\}$ is \emph{universally $L^p$-good} if for every dynamical system $(X,\F,m, T)$ and every $f\in L^p(X,m)$, $\displaystyle\lim_{N\to\infty} A_N ^{({\mathfrak n})} f(x)$ exists for almost every $x\in X$.\\ 
\end{definition}
\noindent Birkhoff's Ergodic Theorem asserts, for instance, that the sequence $n_k=k$ is universally $L^1$-good.  On the other extreme, the sequence $n_k=2^k$ is not even universally $L^\infty$-good (lacunary sequences are bad for convergence of ergodic averages in various strong ways, see for example \cite{JR2} or \cite{SSO}).  Between these extrema lie many results on the existence of universally $L^p$-good sequences of various sorts, beginning with Bourgain's celebrated result \cite{JB2} that $n_k=k^2$ is universally $L^2$-good; see \cite{JB3} and \cite{EAS} for extensions of this result to other sequences.
\\
\\ The most restrictive case $p=1$ is more subtle than the others.  A surprising illustration of the difference is the recent result of Buczolich and Mauldin that $n_k=k^2$ is not universally $L^1$-good \cite{DSA}.  Positive results in $L^1$ have been difficult to come by, particularly for sequences which are sparse in $\N$.
\\
\\ Universally $L^1$-good sequences of density 0 had long been known to exist, but these were sparse block sequences, which consist of large 'blocks' of consecutive integers, separated by wide gaps. Bellow and Losert \cite{BL} showed that for any $F: \N\to\R^+$, there exists a universally $L^1$-good block sequence $\{n_k\}$ with $n_k\geq F(k)$.  To distinguish such block sequences from more uniformly distributed ones, we recall the notion of Banach density:
\begin{definition}
A sequence of positive integers $\{n_k\}$ has Banach density $c$ if $$\lim_{m\to\infty} \sup_{N} \frac{|\{n_k\in[N,N+m)\}|}{m}=c.$$
\end{definition}
Note that block sequences with arbitrarily large block lengths have Banach density 1 (the sequences in \cite{BL} are all of this sort).  The first example of a Banach density 0 universally $L^1$-good sequence was constructed by Buczolich \cite{GTI}, and Urban and Zienkiewicz \cite{UZ} subsequently proved that the sequence $\lfloor k^a \rfloor$ for $1<a<1+\frac1{1000}$ is universally $L^1$-good.
\\
\\ Bourgain \cite{JB2} noted that certain sparse random sequences were universally $L^p$-good with probability 1 for all $p>1$.  These sequences are generated as follows: given a decreasing sequence of probabilities $\{\tau_j:j\in\N\}$, let $\{\xi_j:j\in\N\}$ be independent random variables on a probability space $\Omega$ with $\P(\xi_j=1)=\tau_j,\;\P(\xi_j=0)=1-\tau_j$.  Then for each $\omega\in\Omega$, define a random sequence by taking the set $\{n:\xi_n(\omega)=1\}$ in increasing order.  (For $\alpha>0$ and $\tau_j=O(j^{-\alpha})$, these sequences have Banach density 0 with probability 1; see Prop. \ref{banach} of this paper.)
\\
\\In their treatment \cite{PETHA} of Bourgain's method, Rosenblatt and Wierdl demonstrate by Fourier analysis that if $\tau_j\to0$ slowly enough (e.g. $\tau_j\geq \frac{c(\log \log j)^{1+\epsilon}}{j}$ suffices), then $\{n:\xi_n(\omega)=1\}$ is universally $L^2$-good with probability 1 (see Example 4.7), thus proving the existence of superpolynomial universally $L^2$-good sequences.  However, their approach cannot be applied to the $L^1$ case.
\\
\\ In this paper, we apply a construction of \cite{UZ} to these random sequences and achieve the following $L^1$ result:

\begin{theorem}
\label{L1}
Let $0<\alpha<1/2$, and let $\xi_n$ be independent selector variables on $\Omega$ with $\P(\xi_n=1)=n^{-\alpha}$.  Then there exists a set $\Omega'\subset\Omega$ of probability 1 such that for every $\omega\in\Omega'$, $\{n:\xi_n(\omega)=1\}$ is universally $L^1$-good.
\end{theorem}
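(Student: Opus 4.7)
The plan is to follow the standard scheme for sparse subsequence ergodic theorems: establish an almost-sure weak-type $(1,1)$ maximal inequality for the averages $A_N^{({\mathfrak n}(\omega))}$, prove almost-everywhere convergence on a dense subclass of $L^1$, and combine the two via the Banach principle. The paper signals that step one will proceed through Proposition \ref{shine on}, which extracts the maximal inequality from a deterministic structural condition on the convolution kernels. So the bulk of the work is to show that the random measures $\mu_N^{(\omega)} := \frac{1}{|S_N(\omega)|}\sum_{k\in S_N(\omega)} \delta_k$, where $S_N(\omega)=\{n\le N:\xi_n(\omega)=1\}$, satisfy this deterministic condition for almost every $\omega$.

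By the Calder\'on transference principle it suffices to prove the weak $(1,1)$ bound for the convolution operators $f \mapsto f * \mu_N^{(\omega)}$ on $\ell^1(\Z)$. The expected count $\E|S_N| = \sum_{n\le N}n^{-\alpha}\sim\frac{N^{1-\alpha}}{1-\alpha}$ fixes the correct normalization, and Bernstein-type concentration together with Borel--Cantelli along a dyadic scale $N_j=2^j$ should give, almost surely, $|S_N|\asymp N^{1-\alpha}$ as well as much finer local uniformity of $S_N$ across subintervals of $[1,N]$. These local density estimates are the ingredients I would feed into Proposition \ref{shine on}; the restriction $\alpha<1/2$ enters here because the relevant variance sums, such as $\sum_n \tau_n(1-\tau_n)$ localized to subintervals, only dominate their means comfortably below this threshold (indeed $\alpha=1/2$ is a natural phase transition for selectors with mean $n^{-\alpha}$).

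For the dense-subclass convergence, I would use $L^2\cap L^\infty$. Under $\tau_n=n^{-\alpha}$ with $\alpha<1$, the Rosenblatt--Wierdl criterion recalled in the introduction already yields universal $L^2$-goodness almost surely, so a.e.\ convergence on $L^2\cap L^\infty$ is free (and in any case it follows directly from the spectral theorem plus second-moment control on the Fourier side of $\mu_N^{(\omega)}$, which is easy in the regime $\alpha<1/2$). The Banach principle then upgrades this, together with the weak $(1,1)$ maximal inequality, to a.e.\ convergence for every $f\in L^1$, the exceptional null set in $\omega$ being the union of those coming from the maximal inequality and from the $L^2$-theorem.

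The main obstacle, as the paper's emphasis on Proposition \ref{shine on} suggests, is verifying its hypothesis for the random $\mu_N^{(\omega)}$. Concentration alone controls global counts, but the deterministic condition presumably demands scale- and translation-uniform smoothness of the kernels; a crude union bound over dyadic scales and positions will lose too much, so an Urban--Zienkiewicz-style smoothing of $\mu_N$ by convolution against short intervals must be imported and then controlled probabilistically at every scale simultaneously. Threading this needle --- obtaining scale-uniform smoothness of $\mu_N^{(\omega)}$ from independence of the $\xi_n$ while exploiting the specific polynomial rate $\tau_n=n^{-\alpha}$ --- is where the technical heart of the proof should lie, and is precisely where the constraint $\alpha<1/2$ should become sharp.
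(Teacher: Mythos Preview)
Your high-level scheme matches the paper's: transference to $\ell^1(\Z)$, a weak $(1,1)$ maximal inequality obtained via Proposition~\ref{shine on}, almost-sure $L^2$-goodness on a dense subclass, and the Banach principle to finish. Where your proposal goes astray is in guessing what the hypothesis of Proposition~\ref{shine on} actually is and how to verify it, and this is the entire technical content of the theorem.

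The deterministic condition is not a scale- and translation-uniform smoothness estimate on $\mu_j^{(\omega)}$, and no Urban--Zienkiewicz-style smoothing by short intervals is involved. The paper writes $\mu_j^{(\omega)}=\E\mu_j+\nu_j^{(\omega)}$ (expectation plus centered fluctuation); the weak $(1,1)$ bound for $\sup_j|\cdot\ast\E\mu_j|$ is immediate from Birkhoff, and the only structural input to Proposition~\ref{shine on} is the pointwise autocorrelation bound
\[
|\nu_j^{(\omega)}\ast\tilde\nu_j^{(\omega)}(x)|\le A_0\,r_j^{-1}\delta_0(x)+A_1\,R_j^{-(1+\epsilon)}.
\]
This is established, for each fixed $k\neq 0$, by a Chernoff bound on the bilinear sum $\sum_n(\xi_n-\tau_n)(\xi_{n+k}-\tau_{n+k})$ (after splitting the index set so the summands become independent), followed by a union bound over the $O(2^j)$ values of $k$ in the support and Borel--Cantelli over $j$. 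Contrary to your concern, this crude union bound loses nothing: the Chernoff tail is $\exp(-c\,2^{\kappa j})$, which swallows any polynomial-in-$2^j$ factor. The constraint $\alpha<1/2$ enters exactly in the comparison
\[
\beta(2^j)^{-2}\Bigl(\sum_{n\le 2^j}\tau_n^2\Bigr)^{1/2}\asymp 2^{(-3/2+\alpha)j}=O\bigl(R_j^{-(1+\epsilon)}\bigr),
\]
i.e.\ through the size of $\sum\tau_n^2$, not through any localized variance-versus-mean balance on subintervals.
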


\noindent Thus we prove the existence of universally $L^1$-good sequences which grow more rapidly than the ones obtained in \cite{UZ} or \cite{GTI}, and which grow uniformly as compared to the sparse block sequences of \cite{BL}.  In particular, with probability 1 these sequences have $n_k={\mathbf\Theta}(k^{1/(1-\gamma)})$ (that is, $c_\omega k^{1/(1-\gamma)}\leq n_k\leq C_\omega k^{1/(1-\gamma)}$), so Theorem \ref{L1} applies to random sequences nearly as sparse as the sequence of squares.
\\
\\ Our method is as follows: in Section 2 we define our notation and reduce the problem (by transference) to one of proving a weak maximal inequality on $\Z$ for convolutions with a series of random $\ell^1(\Z)$ functions $\mu_n^{(\omega)}$.  In Section 3, we use the framework of \cite{UZ} to prove this inequality under an assumption about the convolutions of $\mu_n^{(\omega)}$ with their reflections about the origin; and in Section 4, we establish that with probability 1, these random functions do indeed satisfy that assumption.

\section{Definitions, and Reduction to a Weak Maximal Inequality}

Let $\{\tau_n:n\in\N\}$ be a nonincreasing sequence of probabilities.  Let $\Omega$ be a probability space, and define independent mean $\tau_n$ Bernoulli random variables $\{\xi_n(\omega): n\in\N\}$ on  $\Omega$; that is, $\P(\xi_n=1)=\tau_n$ and $\P(\xi_n=0)=1-\tau_n$.  Let
$$\beta(N):=\sum_{n=1}^N\tau_n.$$
\begin{definition} For a dynamical system $(X,\F,m, T\})$ and $f\in L^1(X)$, define  the random average 
\begin{eqnarray*}
A_N^{(\omega)}f(x):=\beta(N)^{-1}\sum_{n=1}^N\xi_n(\omega)f(T^nx)
\end{eqnarray*}
 and its $L^1(X)$-valued expectation 
\begin{eqnarray*}
\E_\omega A_N^{(\omega)}f(x):=\beta(N)^{-1}\sum_{n=1}^N\tau_nf(T^nx).
\end{eqnarray*}
\end{definition}
\begin{remark}
$A_N^{(\omega)}f$ differs from the subsequence averages discussed before by the factor $\displaystyle\beta(N)^{-1}\sum_{n=1}^N\xi_n(\omega)$.  However, if $\beta(N)\to\infty$, then with probability 1 in $\Omega$, $\beta(N)^{-1}\sum_{n=1}^N\xi_n(\omega)\to1$; this follows quickly from an application of Chernoff's Inequality, which we will use elsewhere in this paper:
\end{remark}
\begin{theorem}
\label{chernoff}
Let $\{X_n\}_{n=1}^N$ a sequence of independent random variables with $|X_n|\leq 1$ and $\E X_n=0$.  Let $X=\displaystyle\sum_{n=1}^N X_n$, and $\sigma^2=\Var X=\E X^2$.  Then for any $\lambda>0$, $$\P(|X|\geq\lambda\sigma)\leq2\max(e^{-\lambda^2/4},e^{-\lambda\sigma/2}).$$
\end{theorem}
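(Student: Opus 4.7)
My plan is to use the standard exponential moment (Chernoff) method, bounding $\E[e^{tX}]$ via independence and a quadratic inequality for $e^{tX_n}$, and then to optimize the parameter $t$ in two regimes corresponding to the two branches of the $\max$ in the conclusion.

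First, for any $t>0$, Markov's inequality applied to $e^{tX}$ gives
$$\P(X\geq \lambda\sigma) \leq e^{-t\lambda\sigma}\,\E[e^{tX}] = e^{-t\lambda\sigma}\prod_{n=1}^N \E[e^{tX_n}]$$
by independence. To control a single factor, I will use the elementary inequality $e^y \leq 1+y+y^2$, valid for $|y|\leq 1$. Since $|X_n|\leq 1$ and $\E X_n=0$, this yields, for any $0<t\leq 1$,
$$\E[e^{tX_n}] \leq 1 + t\,\E X_n + t^2\,\E X_n^2 = 1 + t^2 \sigma_n^2 \leq \exp(t^2 \sigma_n^2),$$
where $\sigma_n^2 = \E X_n^2$. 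Multiplying over $n$ and using $\sum_n \sigma_n^2 = \sigma^2$, I obtain the subgaussian-type estimate
$$\P(X \geq \lambda \sigma) \leq \exp\bigl(-t\lambda\sigma + t^2\sigma^2\bigr), \qquad 0<t\leq 1.$$

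Now I optimize over $t$ in two cases. In the moderate-deviation regime $\lambda \leq 2\sigma$, I take $t = \lambda/(2\sigma) \leq 1$, producing the bound $\exp(-\lambda^2/4)$. In the large-deviation regime $\lambda > 2\sigma$, the unconstrained optimum $\lambda/(2\sigma)$ exceeds $1$, so I simply set $t=1$; the exponent becomes $-\lambda\sigma+\sigma^2$, and because $\sigma^2 \leq \lambda\sigma/2$ in this regime, I recover $\exp(-\lambda\sigma/2)$. Applying the same argument to $-X$ (which has the same variance) and combining via a union bound yields the factor $2$ in front of the $\max$.

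The only substantive step is the per-summand bound $\E[e^{tX_n}]\leq \exp(t^2\sigma_n^2)$ for $t\leq 1$; once this is in hand, the rest is routine optimization. The mild obstacle is remembering to enforce the constraint $t\leq 1$ (needed for the inequality $e^y\leq 1+y+y^2$), which is exactly what forces the two-regime conclusion and prevents a single Gaussian-type tail from holding for all $\lambda$.
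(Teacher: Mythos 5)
Your proof is correct, and all the constants work out: the per-summand bound $\E[e^{tX_n}]\leq e^{t^2\sigma_n^2}$ for $0<t\leq 1$ is valid, the choice $t=\lambda/(2\sigma)$ in the regime $\lambda\leq 2\sigma$ gives exactly $e^{-\lambda^2/4}$, and $t=1$ with $\sigma^2\leq\lambda\sigma/2$ gives $e^{-\lambda\sigma/2}$ in the other regime, with the union bound over $\pm X$ supplying the factor $2$. The paper does not actually prove this statement; it simply cites Theorem 1.8 of Tao and Vu, and your argument is the standard exponential-moment proof underlying that reference, so you have in effect supplied the omitted proof rather than diverged from it. The only point worth adding is the degenerate case $\sigma=0$, where $t=\lambda/(2\sigma)$ is undefined; there the claimed bound is at least $2e^{-\lambda\sigma/2}=2\geq 1$ and so holds trivially.
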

\begin{proof}
This is Theorem 1.8 in \cite{TV}, for example.
\end{proof}

\noindent We restrict ourselves to the set $\Omega_1\subset\Omega$ on which $\beta(N)^{-1}\sum_{n=1}^N\xi_n(\omega)\to1$.  The a.e. convergence of $A_N^{(\omega_0)}f(x)$ for every dynamical system $(X,\F,m,T)$ and every $f\in L^p(X)$ is then equivalent to the statement that $\{j\in\N: \xi_j(\omega_0)=1\}$ is universally $L^p$-good.  We further remark that for a power law $\tau_n=n^{-\alpha}$, we have $N^{\alpha-1}\beta(N)\to C\in(0,\infty)$ for $\alpha<1$.
\\ 
\\ By Bourgain's result in \cite{JB2}, there is a set $\Omega_2\subset\Omega_1$ with $\P(\Omega_2)=1$ such that for $\omega\in\Omega_2$ we have a.e. convergence of $A_N^{(\omega)}f$ for all $f\in L^2(X)$, which is dense in $L^1(X)$.  Theorem \ref{L1} thus reduces to proving on a set of probability 1 the weak maximal inequality
\begin{eqnarray}
\|\sup_N |A^{(\omega)}_Nf|\|_{1,\infty}\leq C_\omega\|f\|_1 \;\forall f\in L^1(X).
\end{eqnarray}
As usual, it is enough to take this supremum over the dyadic subsequence $N\in\{2^j:j\in\N\}$, since $\frac{\beta(2^{j+1})}{\beta(2^j)}\leq 2$ and thus $0\leq A^{(\omega)}_Nf\leq 2A^{(\omega)}_{2^{j+1}}f$ for $f\geq0$ and $2^j\leq N<2^{j+1}$.  As in \cite{APET} and other papers, we can transfer this problem to the group algebra $\ell^1(\Z)$.  Namely, if we define the random $\ell^1(\Z)$ functions
\begin{eqnarray*}
\mu_j^{(\omega)}(n)&:=&\left\{\begin{array}{ll} \beta(2^j)^{-1}\xi_{n}(\omega), & 1\leq n\leq 2^j \\ 0 & \text{otherwise} \end{array}\right.\\
\E\mu_j(n)&:=&\left\{\begin{array}{ll} \beta(2^j)^{-1}\tau_n, & 1\leq n\leq 2^j \\ 0 & \text{otherwise}\end{array}\right.\\
\nu_j^{(\omega)}(n)&:=&\mu_j^{(\omega)}(n)-\E\mu_j^{(\omega)}(n),
\end{eqnarray*}
then $\mu_j^{(\omega)}$ and $\E\mu_j$ correspond to the operators $A_{2^j}^{(\omega)}$ and $\E_\omega A_{2^j}^{(\omega)}$, respectively.  It suffices to prove that with probability 1 in $\Omega$,
\begin{eqnarray}
\label{eclipse}
\| \sup_j | \varphi\ast\mu_{j}^{(\omega)} |\|_{1,\infty}\leq C_\omega\| \varphi\|_1\hspace{12pt}\forall \varphi\in\ell^1(\Z).
\end{eqnarray}
We will use $\tilde\mu$ to denote the reflection of a function $\mu$ about the origin; as the adjoint of the operator given by convolution with $\mu$ is a convolution with $\tilde\mu$, this will be an important object.  (It would be standard to use the notation $\mu^*$, but this becomes unwieldy when using other superscripts as above.)
% BEGIN L^1 SECTION
\section{Calderon-Zygmund Argument}
The proof of (\ref{eclipse}) uses a generalization of a deterministic argument from the paper by Urban and Zienkiewicz \cite{UZ}, related to a construction of Christ in \cite{MC2}:
%DETERMINISTIC PROPOSITION
\begin{proposition}
\label{shine on}
Let $\mu_j$ and $\nu_j$ be sequences of functions in $\ell^1(\Z)$.  Let $r_j:=|\emph{supp }\mu_j|$ and suppose $\emph{supp } \nu_j \subset [-R_j,R_j]$.  Assume there exist $\epsilon>0$ and $A,A_0,A_1<\infty$ such that $\sum_{j\leq k} r_j\leq Ar_k\;\forall k\in\N$ and
\begin{eqnarray}
\label{convo}
|\nu_j\ast\tilde\nu_j(x)|\leq A_0r_j^{-1}\delta_0(x)+A_1R_j^{-(1+\epsilon)},\hspace{12pt}\forall x\in\Z.
\end{eqnarray}
If for all $\varphi\in\ell^1$, $\| \displaystyle\sup_j \varphi\ast |\mu_j-\nu_j|\|_{1,\infty}\leq C\|\varphi\|_1$ and $\| \displaystyle\sup_j|\varphi\ast\mu_j|\|_{p,\infty}\leq C_p\|\varphi\|_p$ for some $1<p\leq\infty,$ then 
\begin{eqnarray}
\label{sup}
\|\sup_j|\varphi\ast\mu_j|\|_{1,\infty}\leq C'\|\varphi\|_1 \hspace{12pt}\forall\varphi\in\ell^1(\Z).
\end{eqnarray}
\end{proposition}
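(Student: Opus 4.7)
The plan is a Calder\'on--Zygmund decomposition combined with a $TT^*$-style $L^2$ estimate enabled by the autocorrelation bound (\ref{convo}). First split
$$\sup_j |\varphi \ast \mu_j| \leq \sup_j |\varphi|\ast|\mu_j - \nu_j| + \sup_j |\varphi \ast \nu_j|,$$
and handle the first summand directly by the weak $(1,1)$ hypothesis applied to $|\varphi|$. For the second piece, at a given level $\lambda>0$ perform a dyadic CZ decomposition of $\varphi$ on $\Z$: write $\varphi = g + b$ with $b = \sum_Q b_Q$ supported on disjoint dyadic intervals $Q$, $\int b_Q = 0$, $\|b_Q\|_1 \leq C\lambda|Q|$, $\sum_Q|Q| \leq C\|\varphi\|_1/\lambda$, $\|g\|_\infty \leq C\lambda$, $\|g\|_1 \leq \|\varphi\|_1$. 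Pre-truncating $\varphi$ at level $\lambda$ allows me to also arrange $\|b_Q\|_\infty \leq C\lambda$, which is essential below.

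For the good part, use $\sup_j |g\ast\nu_j| \leq \sup_j |g \ast \mu_j| + \sup_j |g|\ast|\mu_j - \nu_j|$: the first term is controlled by the weak $(p,p)$ hypothesis together with the interpolated bound $\|g\|_p^p \leq \|g\|_\infty^{p-1}\|g\|_1 \leq C\lambda^{p-1}\|\varphi\|_1$, giving distribution measure at level $\lambda$ of order $\|\varphi\|_1/\lambda$; the second by the weak $(1,1)$ hypothesis on $|g|$.

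For the bad part, let $Q^*$ be a fixed enlargement of $Q$ and $E := \bigcup_Q Q^*$, so $|E| \leq C\|\varphi\|_1/\lambda$. Assign to each $Q$ the critical index $j_Q$ so that $R_{j_Q}$ is comparable to $|Q|$. On $E^c$, the summand $b_Q \ast \nu_j$ vanishes whenever $j < j_Q$ (by support considerations), so $b\ast\nu_j = B_j\ast\nu_j$ with $B_j := \sum_{Q:\,j_Q \leq j} b_Q$. Dominating the supremum by the square function and applying Chebyshev yields
$$\bigl|\{x\in E^c:\sup_j|b\ast\nu_j|(x) > \lambda\}\bigr| \leq \frac{1}{\lambda^2}\sum_j \|B_j\ast\nu_j\|_2^2.$$
Now expand each $L^2$ norm via $\|B_j\ast\nu_j\|_2^2 = \sum_{Q,Q'}\langle b_Q,\,b_{Q'}\ast \nu_j\ast\tilde\nu_j\rangle$ and insert (\ref{convo}): the $\delta_0$ piece of the autocorrelation contributes only on the diagonal $Q = Q'$ (by disjointness of supports), producing $A_0 r_j^{-1}\|b_Q\|_2^2$, while the uniform $A_1 R_j^{-(1+\epsilon)}$ piece produces $A_1R_j^{-(1+\epsilon)}\|b_Q\|_1\|b_{Q'}\|_1$ in both the diagonal and off-diagonal cases.

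The main obstacle is closing the sum $\sum_j \|B_j\ast\nu_j\|_2^2 \leq C\lambda\|\varphi\|_1$. The strategy is to swap the order of summation so that each cube $Q$ is charged the tails $\sum_{j\geq j_Q} r_j^{-1}$ and $\sum_{j \geq j_Q} R_j^{-(1+\epsilon)}$. The hypothesis $\sum_{j\leq k} r_j \leq A r_k$ forces geometric growth of $r_j$, collapsing the first tail to $O(r_{j_Q}^{-1})$; analogous geometric growth of $R_j$ (which holds in the intended application) collapses the second tail to $O(R_{j_Q}^{-(1+\epsilon)}) = O(|Q|^{-(1+\epsilon)})$. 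Combined with $\|b_Q\|_2^2 \leq C\lambda\|b_Q\|_1$ and $\|b_Q\|_1 \leq C\lambda|Q|$, the diagonal contribution telescopes to $\leq C\lambda\sum_Q\|b_Q\|_1 \leq C\lambda\|\varphi\|_1$. The subtler off-diagonal cross sum is the hardest piece: for each $Q$ the cubes $Q'$ that pair nontrivially with it at scale $j$ must lie within distance $2R_j$ of $Q$, so disjointness of the $Q'$ restricts their total mass to $O(|Q|+R_j)$, after which geometric summability of $R_j^{-\epsilon}$ closes the bound at the desired $C\lambda\|\varphi\|_1$. Combining this with the good-part and difference-part estimates above gives the weak $(1,1)$ bound (\ref{sup}).
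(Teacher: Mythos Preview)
Your overall architecture (CZ decomposition, support argument for large cubes, Chebyshev plus $TT^*$ via (\ref{convo}) for small cubes) matches the paper's, and your off-diagonal treatment is essentially the paper's Lemma~\ref{wall}. The gap is the sentence ``Pre-truncating $\varphi$ at level $\lambda$ allows me to also arrange $\|b_Q\|_\infty \leq C\lambda$.'' A Calder\'on--Zygmund bad piece on $\Z$ need not be bounded: a stopping cube $Q$ only controls the \emph{average} of $|\varphi|$ over $Q$, not its pointwise size, so $\|b_Q\|_\infty$ can be arbitrarily large. If you literally pre-truncate, writing $\varphi=\varphi\chi(|\varphi|\le\lambda)+\varphi\chi(|\varphi|>\lambda)$, the second summand still has to pass through $\sup_j|\cdot\ast\nu_j|$; its support has measure $\le\|\varphi\|_1/\lambda$, but convolution with $\nu_j$ (or $\mu_j$) spreads it over a set of size up to $r_j$ times larger, and neither the weak $(p,p)$ hypothesis nor the difference hypothesis controls this. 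Without $\|b_Q\|_\infty\le C\lambda$ your key step $\|b_Q\|_2^2\le C\lambda\|b_Q\|_1$ fails, and the diagonal sum $\sum_j r_j^{-1}\|B_j\|_2^2$ does not close.

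The paper's remedy is precisely a replacement for your pre-truncation: a \emph{$j$-dependent} height cut of the bad function. One writes $b=b^{(j)}+B^{(j)}$ with $b^{(j)}=b\,\chi(|b|>\lambda r_j)$, so that $\|B^{(j)}\|_\infty\le\lambda r_j$ (note the extra factor $r_j$, not just $\lambda$). Then the diagonal term becomes, pointwise,
\[
\sum_j r_j^{-1}|B^{(j)}(x)|^2\le |b(x)|^2\sum_{j:\,\lambda r_j\ge|b(x)|} r_j^{-1}\le C\lambda|b(x)|,
\]
using the geometric growth of $r_j$. The large part $b^{(j)}$ is handled by switching \emph{back} to $\mu_j$ (via the difference hypothesis once more) and counting supports: $|\{b^{(j)}\ast\mu_j\neq0\}|\le r_j\cdot|\{|b|>\lambda r_j\}|$, and summing in $j$ with $\sum_{j\le k}r_j\le Ar_k$ yields $\le A\lambda^{-1}\|b\|_1$. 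This $j$-dependent truncation is the one genuinely new idea you are missing; with it in place, the rest of your sketch goes through essentially as in the paper.
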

\begin{proof}
We will follow the argument in Section 3 of \cite{UZ}, which makes use of a Calderon-Zygmund type decomposition of $\varphi$ depending on the index $j$.  We begin with the standard decomposition at height $\lambda>0$: $\varphi =g+b$, where 
\begin{itemize}
\item $\|g\|_\infty\leq\lambda$
\item $b=\displaystyle\sum_{(s,k)\in\B} b_{s,k}$ for some index set $\B\subset \N\times\Z$
\item $b_{s,k}$ is supported on the dyadic cube $Q_{s,k}=[k2^s,(k+1)2^s)\cap\Z$
\item $\{Q_{s,k}:(s,k)\in\B\}$ is a disjoint collection
\item $\|b_{s,k}\|_1\leq\lambda|Q_{s,k}|=\lambda2^s$
\item $\displaystyle \sum_{(s,k)\in\B}|Q_{s,k}|\leq \frac{C}\lambda\| \varphi\|_1$ ($C$ independent of $\varphi$ and $\lambda$).
\end{itemize}
 Let $b_s=\displaystyle\sum_k b_{s,k}$.  We will divide $\displaystyle\sum_s b_s$ into two parts, splitting at the index $s(j):=\min\{s: 2^s\geq R_j\}$.
\\
\\We begin by noting $\{x: \sup_j | \varphi\ast\mu_j (x)|> 6\lambda\}\subset$
\begin{eqnarray*}
\{\sup_j |g \ast\mu_j|> \lambda\}\cup\{ \sup_j |b\ast(\mu_{j}-\nu_j)|> \lambda\} \cup\{\sup_j |\sum_{s=s(j)}^\infty b_s \ast\nu_j|> \lambda\}\cup\{\sup_j | \sum_{s=0}^{s(j)-1}b_s\ast\nu_j|> 3\lambda\}
\end{eqnarray*}
$$=  E_1\cup E_2\cup E_3\cup E_4.$$
\\ \noindent By the weak $(p,p)$ inequality (if $p<\infty$), $|E_1|\leq C\lambda^{-p}\|g\|_p^p\leq C\lambda^{-p}\|g\|_\infty^{p-1}\|g\|_1\leq C\lambda^{-1}\|\varphi\|_1$; if $p=\infty$, consider instead that $\{x: \sup_j |g \ast\mu_{j} (x)|> C_\infty\lambda\}=\emptyset$ since $\|\sup_j |g \ast\mu_{j}|\|_\infty\leq C_\infty\|g\|_\infty=C_\infty\lambda$.
\\
\\Next, $|b\ast(\mu_j-\nu_j) (x)|\leq|b|\ast|\mu_j-\nu_j|(x)$, so by the assumed weak $(1,1)$ inequality, $$|E_2|\leq |\{\sup_j |b|\ast|\mu_j-\nu_j|>\lambda\}|\leq \frac{C}\lambda \|b\|_1\leq \frac{C}\lambda\|\varphi\|_1.$$
To bound $|E_3|$, note that for $s\geq s(j)$, $\text{supp }(b_{s,k}\ast\nu_j )\subset Q_{s,k}+[-R_j,R_j]\subset Q^*_{s,k}$, an expansion of $Q_{s,k}$ by a factor of 3.  Thus
\begin{eqnarray*}
E_3\subset\displaystyle \bigcup_j \bigcup_{k\in\Z,s\geq s(j)}\text{supp }(b_{s,k}\ast\nu_j )\subset\bigcup_{k\in\Z,s\geq s(j)}Q^*_{s,k}
\end{eqnarray*}
and
\begin{eqnarray*}
|E_3|\leq\sum_{(s,k)\in\B} 3|Q_{s,k}|\leq\frac{C}\lambda\|\varphi\|_1.
\end{eqnarray*}
We have thus reduced the problem to obtaining a bound on $|E_4|$.  We will attempt this directly for heuristic purposes, and then modify our setup for the actual argument.  By Chebyshev's Inequality,
\begin{eqnarray}
\label{money}
|\{x: \sup_j |\sum_{s=0}^{s(j)-1} b_s\ast\nu_j (x)|>\lambda\}|\leq\lambda^{-2}\sum_x\sup_j |\sum_{s=0}^{s(j)-1} b_s\ast\nu_j (x)|^2\leq\lambda^{-2}\sum_j \left\|\sum_{s=0}^{s(j)-1} b_s\ast\nu_j\right\|_{\ell^2}^2 
\end{eqnarray}
\begin{eqnarray*}
= \lambda^{-2}\sum_j\sum_{\scriptsize \begin{array}{c} s_1,s_2:\\ 0\leq s_1,s_2 < s(j) \end{array}} \langle b_{s_1}\ast\nu_j ,b_{s_2}\ast\nu_j \rangle_{\ell^2}
\end{eqnarray*}
and we will use our estimate on the convolution product $\nu_j\ast\tilde\nu_j$:
%Lemma
\begin{lemma}
\label{wall}
Let $f,g\in\ell^1$ such that $\displaystyle\sum_{x\in Q_{s(j),k}}|g(x)|\leq \lambda2^{s(j)}$ for all $k$, and assume the $\nu_j$ satisfy (\ref{convo}).  Then
\begin{eqnarray*}
|\langle f\ast\nu_j ,g\ast\nu_j \rangle|\leq A_0 r_j^{-1}|\langle f,g\rangle|+10A_1 \lambda R_j^{-\epsilon} \|f\|_1.
\end{eqnarray*}
\end{lemma}
\begin{proof}
\begin{eqnarray*}
|\langle f\ast\nu_j ,g\ast\nu_j \rangle|&=&|\langle f\ast\nu_j\ast \tilde\nu_j, g\rangle|\\
&\leq& A_0r_j^{-1}|\langle f,g\rangle|+ A_1R_j^{-(1+\epsilon)}\|f\|_1 \|g\|_1.
\end{eqnarray*}
We let $f_k=f\chi(Q_{s(j),k})$ and $g_l=g\chi(Q_{s(j),l})$; note that $\|g_l\|_1\leq \lambda2^{s(j)}\leq2\lambda R_j$.  If $|k-l|>2$, then $\langle f_k\ast\nu_j,g_l\ast\nu_j\rangle=0$ as the supports are disjoint; thus
\begin{eqnarray*}
|\langle f\ast\nu_j ,g\ast\nu_j \rangle|&\leq&\sum_k\sum_{i=-2}^2 |\langle f_k \ast\nu_j, g_{k+i}\ast\nu_j\rangle|\\
&\leq& \sum_k\sum_{i=-2}^2 A_0r_j^{-1}|\langle f_k, g_{k+i}\rangle|+2A_1\lambda R_j^{-\epsilon}\|f_k\|_1\\
&\leq& A_0 r_j^{-1}|\langle f,g\rangle|+10A_1 \lambda R_j^{-\epsilon} \|f\|_1.
\end{eqnarray*}
\end{proof}
\noindent Therefore 
\begin{eqnarray*}
|\{x: \sup_j |\sum_{s=0}^{s(j)-1} b_s\ast\nu_j (x)|>\lambda\}|&\leq& \lambda^{-2}\sum_j\sum_{\scriptsize \begin{array}{c} s_1,s_2:\\ 0\leq s_1,s_2 < s(j) \end{array}}A_0 r_j^{-1}|\langle b_{s_1},b_{s_2}\rangle|+10A_1 \lambda R_j^{-\epsilon} \|b_{s_1}\|_1\\
&\leq& \lambda^{-2}\sum_j\sum_{0\leq s<s(j)} A_0 r_j^{-1}\|b_s\|_2^2+10A_1\lambda s(j)R_j^{-\epsilon}\|b_s\|_1\\
&\leq& A_0\lambda^{-2}\sum_j r_j^{-1}\|b\|_2^2+10A_1\lambda^{-1}\sum_j \log_2(2R_j)R_j^{-\epsilon}\|b\|_1.
\end{eqnarray*}
\noindent Since $r_j$ (and thus $R_j$) grows faster than any polynomial by the assumption $\sum_{j\leq k} r_j\leq Ar_k\;\forall k\in\N$, the second term is $\leq\frac C \lambda \|\varphi\|_1$ as desired.  The first term does not, however, give us that bound.  We will therefore decompose these functions further.
\\
\\ For each $j$, we decompose $b_{s,k}=b^{(j)}_{s,k}+B^{(j)}_{s,k}$, where $b^{(j)}_{s,k}=b_{s,k}\chi(|b_{s,k}|>\lambda r_j)$.  Define $b^{(j)}_s, B^{(j)}_s, b^{(j)}, B^{(j)}$ by summing over one or both indices, respectively.  Now we see that
\begin{eqnarray*}
E_4&\subset&\{ \sup_j |\sum_{s=0}^{s(j)-1} b^{(j)}_s\ast(\nu_j-\mu_j) |>\lambda\}\cup\{\sup_j |\sum_{s=0}^{s(j)-1} b^{(j)}_s\ast\mu_j|>\lambda\}\cup\{\sup_j |\sum_{s=0}^{s(j)-1} B^{(j)}_s\ast\nu_j |>\lambda\}\\
&=& E_5\cup E_6 \cup E_7.
\end{eqnarray*}
We control $E_5$ just as we controlled $E_2$, since $|b^{(j)}|\leq|b|$; and
\begin{eqnarray*}
|E_6|\leq\sum_j |\{x:|b^{(j)}\ast\mu_{j} (x)|>0\}|&\leq&\sum_j |\text{supp }\mu_{j}|\cdot|\{x: |b(x)|>\lambda r_j\}|\\
&=&\sum_j r_j \sum_{k\geq j} |\{x:\lambda r_k<|b(x)|\leq \lambda r_{k+1}\}|\\
&=&\sum_{k} |\{x:\lambda r_k<|b(x)|\leq \lambda r_{k+1}\}| \sum_{j\leq k}r_j\\
&\leq&\frac{A}\lambda \sum_k \lambda r_k |\{x:\lambda r_k<|b(x)|\leq \lambda r_{k+1}\}|;
\end{eqnarray*}
 now since this sum is a lower sum for $|b|$, we have $|E_6|\leq \frac A\lambda\|b\|_1\leq\frac{C}\lambda\|\varphi\|_1$.
 \\
 \\ We proceed with $E_7$ just as we tried before, since Lemma \ref{wall} applies to the $B^{(j)}_s$ as well as to the $b_s$.  We thus find
\begin{eqnarray*}
|E_7|&\leq& A_0\lambda^{-2}\sum_j r_j^{-1}\|B^{(j)}\|_2^2+10A_1\lambda^{-1}\sum_j \log_2(2R_j)R_j^{-\epsilon}\|B^{(j)}\|_1\\
&\leq& A_0\lambda^{-2}\sum_x \sum_j r_j^{-1}|B^{(j)}(x)|^2+\frac C\lambda\|\varphi\|_1.
\end{eqnarray*}
Because $\displaystyle\sum_{j\leq k} r_j\leq Ar_k\;\forall k\in\N\implies \exists N \text{ s.t. } r_{j+n}\geq 2r_j \forall j\in\N,n\geq N\implies \sum_{j=k}^\infty r_j^{-1}\leq A'r_k^{-1},$ for each $x$
\begin{eqnarray*}
\sum_j r_j^{-1}|B^{(j)}(x)|^2\leq\sum_{ j: \lambda r_j\geq |b(x)|}r_j^{-1}|b(x)|^2\leq A'\lambda|b(x)|
\end{eqnarray*}
so $|E_7|\leq \frac C\lambda\|\varphi\|_1$ and the proof of Proposition \ref{shine on} is complete.
\end{proof}

%Concluding Section
\section{Probabilistic Lemma, Conclusion of the Proof}
\noindent Having established Proposition \ref{shine on}, it remains to show that the random measures $\mu_j^{(\omega)}$ and $\nu_j^{(\omega)}$ satisfy the assumptions with probability 1.  Note first that  $r_j=|\text{supp }\mu_j^{(\omega)}|=\sum_{1\leq n\leq 2^j}\xi_n(\omega)={\mathbf\Theta}(\beta(2^j))={\mathbf\Theta}(2^{(1-\alpha)j})$ on $\Omega_1$, and $R_j=2^{j+1}$. We must prove the bound (\ref{convo}) on $\nu_j^{(\omega)}\ast \tilde\nu_j^{(\omega)}$.
%BASIC LEMMA
\begin{lemma}
\label{cancels}
Let $E\subset\Z$, and let $\{  X_n\}_{n\in E}$ be independent random variables with $|X_n|\leq1$ and $\E X_n=0$.  Assume that $\sum_{n\in E} (\Var X_n)^2\geq1$.  Let $X$ be the random $\ell^1$ function $\sum_{n\in E} X_n\delta_n$, and let $\Z^\times$ denote $\Z\setminus\{0\}$.  Then for any $\theta>0$,
\begin{eqnarray}
\P\left(\| X\ast\tilde X\|_{\ell^\infty(\Z^\times)}\geq \theta(\sum_{n=1}^N (\Var X_n)^2)^{1/2}\right)\leq 4|E|^2\max(e^{-\theta^2/16},e^{-\theta/4}).
\end{eqnarray}
\end{lemma}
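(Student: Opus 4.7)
The plan is to unfold the convolution $X\ast\tilde X(k)$ for $k\ne 0$ as a sum of products $X_n X_{n-k}$, split that sum into two independent pieces by a combinatorial chain-and-two-coloring argument, apply Chernoff (Theorem \ref{chernoff}) to each piece, and conclude by a union bound over the support of $X\ast\tilde X$.

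Unfolding, for any $k\in\Z^\times$ one has
\begin{equation*}
(X\ast\tilde X)(k)=\sum_m X(m)X(m-k)=\sum_{\substack{n\in E\\ n-k\in E}}X_n X_{n-k},
\end{equation*}
a sum of mean-zero, $[-1,1]$-valued terms. The terms are \emph{not} independent: $X_n X_{n-k}$ and $X_{n+k}X_n$ share the factor $X_n$, so Chernoff cannot be applied directly. To remedy this, fix $k>0$ (using the symmetry $X\ast\tilde X(-k)=X\ast\tilde X(k)$) and partition the participating indices into maximal arithmetic chains of step $k$ in $E$: each chain $n_0,n_0+k,\ldots,n_0+\ell k$ contributes the $\ell$ consecutive products $X_{n_0+ik}X_{n_0+(i+1)k}$ for $0\le i<\ell$. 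Two-color these products by the parity of $i$, writing $X\ast\tilde X(k)=S_k^{(0)}+S_k^{(1)}$. Within a single $S_k^{(\epsilon)}$, each $X_j$ appears as a factor of at most one summand, and products from different chains involve disjoint variables; hence the summands of $S_k^{(\epsilon)}$ are pairwise independent (as they are functions of disjoint independent collections).

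Each summand of $S_k^{(\epsilon)}$ has mean zero, modulus at most $1$, and variance $\Var X_n\cdot\Var X_{n-k}\le\tfrac12((\Var X_n)^2+(\Var X_{n-k})^2)$ by AM--GM. Since each index appears in at most one factor of at most one term of $S_k^{(\epsilon)}$, summing gives $\Var S_k^{(\epsilon)}\le\sigma^2:=\sum_{n\in E}(\Var X_n)^2$. Applying Theorem \ref{chernoff} with threshold $(\theta/2)\sigma$ and using both $\Var S_k^{(\epsilon)}\le\sigma^2$ and the standing hypothesis $\sigma\ge 1$ gives
\begin{equation*}
\P\!\left(|S_k^{(\epsilon)}|\ge\tfrac{\theta}{2}\sigma\right)\le 2\max(e^{-\theta^2/16},e^{-\theta/4}),
\end{equation*}
and so $\P(|(X\ast\tilde X)(k)|\ge\theta\sigma)\le 4\max(e^{-\theta^2/16},e^{-\theta/4})$ by the triangle inequality. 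Since the support of $X\ast\tilde X$ lies in $E-E$, which has at most $|E|^2$ elements, a union bound over $k\in\Z^\times$ in this support yields the claimed inequality.

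The main obstacle is the quadratic dependence structure: the summands of $X\ast\tilde X(k)$ share variables, so one cannot feed them directly into Chernoff. The chain-plus-two-coloring decomposition is the critical trick; after it, the rest is a routine variance estimate together with a union bound. A small amount of care is needed when invoking Chernoff with the global parameter $\sigma$ instead of the true standard deviation of $S_k^{(\epsilon)}$, but both exponents in the Chernoff bound move the right way once $\Var S_k^{(\epsilon)}\le\sigma^2$ and $\sigma\ge 1$.
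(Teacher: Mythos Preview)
Your proof is correct and follows essentially the same route as the paper's. The paper phrases the key decomposition as ``partition $E$ into $E_1,E_2$ with $E_i\cap(E_i-k)=\emptyset$,'' which is exactly your chain-and-parity two-coloring stated more tersely; it then bounds the variance of each piece by $\sum_{n\in E}(\Var X_n)^2$ via Cauchy--Schwarz rather than your AM--GM, applies Chernoff with the rescaled parameter (just as you do), and finishes with the same union bound over $\operatorname{supp}(X\ast\tilde X)\subset E-E$.
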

\begin{proof}
For $k\neq0$,
$$X\ast\tilde X(k)=\sum_{n\in E \cap E-k}X_nX_{n+k}=\sum_{n\in E} Y_n$$
where $\E Y_n=0$ and $|Y_n|\leq1$ (of course $Y_n\equiv0$ if $n+k\notin E$).  We want to apply Chernoff's Inequality (Theorem \ref{chernoff}), but the $Y_n$ are not independent.
\\
\\However, we can easily partition $E$ into two subsets $E_1$ and $E_2$ such that $E_i\cap(E_i-k)=\emptyset$ for each $i$; then within each $E_i$, the $Y_n$ depend on distinct independent random variables, so they are independent.
\\
\\ Now $\displaystyle \sum_{n\in E_i} Y_n$ has variance $\displaystyle \sigma_i^2=\sum_{n\in E_i}\Var X_n \Var X_{n+k}\leq\sum_{n\in E}(\Var X_n)^2$
by H\"older's Inequality.  Chernoff's Inequality states that for any $\lambda>0$, $\displaystyle \P(|\sum_{n\in E_i} Y_n|\geq\lambda\sigma_i)\leq2\max(e^{-\lambda^2/4},e^{-\lambda\sigma_i/2})$.
\\
\\ Take $\lambda_i=\theta\sigma_i^{-1}(\sum_{n\in E} (\Var X_n)^2)^{1/2}$; then $\lambda_i\geq\theta$ and $\lambda_i\sigma_i=\theta(\sum_{n\in E} (\Var X_n)^2)^{1/2}\geq\theta$, so
$$\P(|X\ast\tilde X(k)|\geq 2\theta(\sum_{n\in E} (\Var X_n)^2)^{1/2})\leq\sum_{i=1}^2\P(|\sum_{n\in E_i} Y_n|\geq\lambda_i\sigma_i)\leq4\max(e^{-\theta^2/4},e^{-\theta/2}).$$
Since this holds for each $k\neq 0$ and $|\text{supp }X\ast\tilde X|\leq |E|^2$, the conclusion follows (replacing $2\theta$ with $\theta$).
\end{proof}
%corollary
\begin{corollary}
\label{bound}
Let $\nu_j^{(\omega)}$ be the random measure defined as before, $0<\alpha<1/2$ and $\kappa>0$.  Then there is a set $\Omega_3\subset\Omega_2$ with $\P(\Omega_3=1)$ such that for each $\omega\in\Omega_3$,
\begin{eqnarray}
|\nu_j^{(\omega)}\ast \tilde\nu_j^{(\omega)}(x)|\leq C_\omega\beta(2^j)^{-1}\delta_0(x)+C_\omega\beta(2^j)^{-2}2^{\kappa j}\sqrt{\sum_{n=1}^{2^j}\tau_n^2}.
\end{eqnarray}
\end{corollary}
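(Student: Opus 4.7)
The plan is to factor out the scalar and write $\nu_j^{(\omega)} = \beta(2^j)^{-1} X^{(j)}$, where $X^{(j)} := \sum_{n=1}^{2^j}(\xi_n(\omega)-\tau_n)\delta_n$, so that $\nu_j^{(\omega)} \ast \tilde\nu_j^{(\omega)} = \beta(2^j)^{-2}\, X^{(j)}\ast\tilde X^{(j)}$. Then I would bound $X^{(j)}\ast\tilde X^{(j)}$ at $x=0$ and at $x \neq 0$ separately, obtaining pointwise estimates on a probability-$1$ set by combining the available large-deviation inequalities with Borel--Cantelli.

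For $x \neq 0$ (the off-diagonal case) I would apply Lemma \ref{cancels} with $E = \{1,\dots,2^j\}$ and $X_n = \xi_n - \tau_n$: these are independent, centered, and bounded by $1$, and $\sum_{n=1}^{2^j}(\Var X_n)^2 = \sum \tau_n^2(1-\tau_n)^2 \asymp \sum n^{-2\alpha} \to \infty$ under the standing hypothesis $\alpha<1/2$, so the lemma's normalization hypothesis holds for $j$ large. Taking $\theta = 2^{\kappa j}$ in Lemma \ref{cancels} gives a tail estimate whose right-hand side is at most $4\cdot 2^{2j}\max(e^{-2^{2\kappa j}/16},\, e^{-2^{\kappa j}/4})$, which is (super-exponentially) summable in $j$ for any $\kappa > 0$. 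Borel--Cantelli then produces a probability-$1$ set on which, for $j$ large,
$$\|X^{(j)}\ast\tilde X^{(j)}\|_{\ell^\infty(\Z^\times)} \le 2^{\kappa j}\sqrt{\textstyle\sum_n \tau_n^2},$$
yielding the off-diagonal part of the claimed bound after dividing by $\beta(2^j)^2$.

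For $x=0$ the on-diagonal estimate reduces to showing that $\sum_{n=1}^{2^j}(\xi_n-\tau_n)^2 \le C_\omega\, \beta(2^j)$ eventually almost surely, since $\nu_j^{(\omega)}\ast\tilde\nu_j^{(\omega)}(0) = \beta(2^j)^{-2}\sum(\xi_n-\tau_n)^2$. This is a direct concentration statement: using $\xi_n^2 = \xi_n$, one gets $(\xi_n-\tau_n)^2 = \tau_n(1-\tau_n) + (1-2\tau_n)(\xi_n-\tau_n)$, so the sum differs from its mean $\sum \tau_n(1-\tau_n) \le \beta(2^j)$ by a centered sum of independent bounded random variables with total variance $O(\beta(2^j))$. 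Theorem \ref{chernoff} then gives deviation by more than $\beta(2^j)$ with probability $O(e^{-c\beta(2^j)})$, which is summable since $\beta(2^j) \asymp 2^{(1-\alpha)j}$, and a second Borel--Cantelli application finishes the on-diagonal bound.

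Taking the intersection of these two probability-$1$ events with $\Omega_2$ gives $\Omega_3$, and the two estimates together yield the stated inequality. There is no serious obstacle: the generous $2^{\kappa j}$ slack makes the probabilistic summation essentially automatic once Lemma \ref{cancels} is in hand. The only mild subtlety is handling the finitely many initial $j$ (before the hypothesis $\sum(\Var X_n)^2 \ge 1$ of Lemma \ref{cancels} is met, and before the Borel--Cantelli exceptional events have passed), but these are absorbed into $C_\omega$.
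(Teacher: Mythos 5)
Your proposal is correct and follows essentially the same route as the paper: the off-diagonal bound via Lemma \ref{cancels} with $\theta=2^{\kappa j}$ (using $\Var(\xi_n-\tau_n)\leq\tau_n$) plus Borel--Cantelli, and the bound at $x=0$ via a Chernoff estimate on $\sum_{n\le 2^j}(\xi_n-\tau_n)^2$ plus Borel--Cantelli, with the finitely many initial $j$ absorbed into $C_\omega$. Your diagonal computation uses the exact identity $(\xi_n-\tau_n)^2=\tau_n(1-\tau_n)+(1-2\tau_n)(\xi_n-\tau_n)$ instead of the paper's cruder bound $(\xi_n-\tau_n)^2\le\tau_n+\xi_n$, but this is only a cosmetic difference.
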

\begin{proof}
For the bound at $0$, we use the fact that 
\begin{eqnarray*} \nu_j^{(\omega)}\ast\tilde\nu_j^{(\omega)}(0)=\beta(2^j)^{-2}\sum_{n=1}^{2^j}(\xi_n(\omega)-\tau_n)^2&=&\beta(2^j)^{-2}\sum_{n=1}^{2^j}\left(\tau_n^2(1-\xi_n(\omega))+(1-\tau_n)^2\xi_n(\omega)\right)\\
&\leq&\beta(2^j)^{-2}\sum_{n=1}^{2^j}(\tau_n+\xi_n(\omega))=2\beta(2^j)^{-1}+\beta(2^j)^{-2}\sum_{n=1}^{2^j}(\xi_n(\omega)-\tau_n)
\end{eqnarray*}
so that
$$\P(\nu_j^{(\omega)}\ast\tilde\nu_j^{(\omega)}(0)>3\beta(2^j)^{-1})\leq\P\left(\sum_{n=1}^{2^j} (\xi_n(\omega)-\tau_n) >\beta(2^j)\right)\leq 2\exp(-\frac12\beta(2^j))$$ for $j$ sufficiently large, by Chernoff's inequality.  The Borel-Cantelli Lemma implies that $\nu_j^{(\omega)}\ast\tilde\nu_j^{(\omega)}(0)\leq 3\beta(2^j)^{-1}$ for $j$ sufficiently large (depending on $\omega$), so there exists $C_\omega$ with $0\leq\nu_j^{(\omega)}\ast\tilde\nu_j^{(\omega)}(0)\leq C_\omega \beta(2^j)^{-1}$ for all $j$.
\\
\\ For the other term, we note that $\Var \xi_n\leq \tau_n$, so we set $\theta=2^{\kappa j}$ and apply Lemma \ref{cancels}:
\begin{eqnarray*}
\P\left(\beta(2^j)^2\|\nu_j^{(\omega)}\ast\tilde\nu_j^{(\omega)}\|_{\ell^\infty(\Z^\times)}\geq 2^{\kappa j}(\sum_{n=1}^{2^j}\tau_n^2)^{1/2}\right)\leq 4\cdot2^{2j}\exp(-2^{\kappa j}/4)
\end{eqnarray*}
which sum over $j$.  The Borel-Cantelli Lemma again proves the bound holds with probability 1.
\end{proof}
\noindent Note that $\displaystyle\sum_{n=1}^{2^j}\tau_n^2={\mathbf\Theta}(2^{(1-2\alpha)j});$ thus for $\alpha<1/2$ and $\kappa+\epsilon=1/2-\alpha$, 
$$\beta(2^j)^{-2}(\sum_{n=1}^{2^j}\tau_n^2)^{1/2}2^{\kappa j}=O(2^{(-\frac{3}2+\alpha+\kappa)j})=O(R_j^{-(1+\epsilon)}).$$
Therefore the measures $\nu_j^{(\omega)}$ satisfy the bound (\ref{convo}) , for all $\omega\in\Omega_3$.  Since $\mu_j^{(\omega)}-\nu_j^{(\omega)}=\E\mu_j$ is a weighted average of the regular ergodic averages, $\sup_j |\varphi\ast\E\mu_j|\leq C\sup_N|\varphi\ast N^{-1}\chi[1,N]|$ so that Birkhoff's Ergodic Theorem implies the needed weak $\ell^1$ bound; and the $\ell^\infty$ maximal inequality for $\mu_j^{(\omega)}$ is trivial.  Thus Proposition \ref{shine on} implies the weak maximal inequality (\ref{eclipse}), and we have proved Theorem \ref{L1}.
\begin{remark}
This argument does not require $\tau_n$ to obey a power law.  If $\tau_n$ is decreasing and if $\displaystyle\beta(2^j)^{-2}\sqrt{\sum_{n=1}^{2^j}\tau_n^2}\leq C2^{-(1+\epsilon)j}$ for some $\epsilon>0, C<\infty$ and all $j$, the sequence $\{n:\xi_n(\omega)=1\}$ will be universally $L^1$-good with probability 1.
\end{remark}
It remains, finally, to note that $\{n:\xi_n=1\}$ indeed has Banach density 0 (with probability 1) if the $\tau_n$ decrease more rapidly than some power law.  Conveniently enough, a converse result also holds:
\begin{proposition}
\label{banach}
Let $\{\tau_n\}$ be a decreasing sequence of probabilities, and let $\xi_n$ be independent Bernoulli random variables with $\P(\xi_k=1)=k^{-\alpha}$.  Then if $\tau_n= O(n^{-\alpha})$ for some $\alpha>0$, the sequence of integers $\{n:\xi_n=1\}$ has Banach density 0 with probability 1 in $\Omega$; otherwise, it has Banach density 1 with probability 1 in $\Omega$.
\end{proposition}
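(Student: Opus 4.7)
The plan is to use Fekete's subadditive lemma to characterize the Banach density as $L := \inf_m \sigma(m)/m = \lim_m \sigma(m)/m$, where $\sigma(m) := \sup_N |\{n \in [N, N+m) : \xi_n(\omega) = 1\}|$. Subadditivity $\sigma(m_1 + m_2) \leq \sigma(m_1) + \sigma(m_2)$ follows from splitting a window of length $m_1+m_2$ into two sub-windows of lengths $m_1$ and $m_2$; the question of Banach density $0$ versus $1$ then reduces to understanding the size of $\sigma(m)/m$ for large $m$.

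For the first case, $\tau_n = O(n^{-\alpha})$ with $\alpha > 0$, I want to show $L = 0$ a.s. Since $L \leq \sigma(m)/m$ for every $m$ (and every $\omega$), it suffices to prove that for each $\epsilon > 0$, $\P(\sigma(m) \geq \epsilon m) \to 0$ as $m \to \infty$. I would partition $\N$ into blocks $B_k = [km, (k+1)m)$ and observe that any length-$m$ window is contained in two consecutive blocks, giving $\sigma(m) \leq 2 \sup_k S_k$ with $S_k := |\{n \in B_k : \xi_n = 1\}|$. The expected counts satisfy $\mu_k := \E S_k \leq C k^{-\alpha} m^{1-\alpha}$ for $k \geq 1$ and $\mu_0 \leq C m^{1-\alpha}$. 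I would then apply the Poisson-tail Chernoff bound $\P(S_k \geq a) \leq (e\mu_k/a)^a$ (valid for $a \geq \mu_k$) with $a = \epsilon m/2$, and sum over $k$: the $k \geq 1$ portion factors as $(\mathrm{const}/m^\alpha)^{\epsilon m/2} \sum_{k \geq 1} k^{-\alpha \epsilon m/2}$, with the series converging once $\alpha \epsilon m/2 > 1$, yielding a total bound that vanishes super-exponentially in $m$.

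For the second case, $\tau_n$ not $O(n^{-\alpha})$ for any $\alpha > 0$: note $\sigma(m)/m \leq 1$, so $L = 1$ is equivalent to $\sigma(m) = m$ for every $m$, i.e., for each $m$ some length-$m$ window is entirely $1$s. Taking disjoint blocks $W_k = [km, (k+1)m)$, the events $A_k := \{W_k \text{ is all 1s}\}$ are independent, with $\P(A_k) = \prod_{n \in W_k} \tau_n \geq \tau_{(k+1)m}^m$ by the monotonicity of $\tau_n$. The second Borel-Cantelli lemma reduces the problem to showing $\sum_k \tau_{km}^m = \infty$ for each $m$. Suppose not: $\sum_n \tau_n^m < \infty$ for some $m$; since $\tau_n^m$ is decreasing, $n \tau_n^m \to 0$, so $\tau_n = o(n^{-1/m})$ and hence $\tau_n = O(n^{-1/m})$, contradicting the hypothesis with $\alpha := 1/m$. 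A countable intersection over $m$ then gives $\sigma(m) = m$ for all $m$ a.s., so $L = 1$.

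The main obstacle I expect is controlling $\sup_k S_k$ over the infinitely many blocks $k \geq 0$ in the first case. A standard Chernoff bound of the form $\exp(-c \epsilon m)$ per block would not produce a summable bound over $k$; the Poisson-type tail $(e\mu_k/a)^a$ is essential because it exploits the smallness of $\mu_k$ for large $k$ (ensured by the hypothesis $\tau_n = O(n^{-\alpha})$) and decays much faster than exponentially, ultimately giving a convergent sum in $k$ with super-exponential decay in $m$.
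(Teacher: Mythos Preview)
Your argument is correct in both directions. The second case is essentially the paper's own argument: disjoint blocks, the second Borel--Cantelli lemma applied to the events ``block is all 1s,'' and the observation that $\sum_n \tau_n^m<\infty$ would force $\tau_n=O(n^{-1/m})$ by monotonicity. The paper phrases the divergence argument slightly differently (extracting a sparse subsequence along which $\tau_{n_k}\geq n_k^{-1/R}$), but the content is the same.

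In the first case your route differs from the paper's. You invoke the Poisson-tail Chernoff bound $\P(S_k\geq a)\leq (e\mu_k/a)^a$ and sum over infinitely many blocks, obtaining super-exponential decay of $\P(\sigma(m)\geq\epsilon m)$ and concluding via $\P(L\geq\epsilon)\leq\inf_m\P(\sigma(m)\geq\epsilon m)=0$. The paper instead uses the elementary estimate $\P\bigl(\sum_{j=rn}^{r(n+1)-1}\xi_j\geq m\bigr)\leq 2^r\tau_{rn}^m$ (essentially a union bound over $m$-subsets of the block), chooses fixed integers $m,r$ with $m\alpha>1$ and $r>mK$, applies the first Borel--Cantelli lemma in $n$, and deduces Banach density at most $3/K$. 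The paper's approach is more elementary---no Chernoff, no Fekete---and avoids summing a Chernoff bound over infinitely many blocks; your approach is perhaps more systematic and gives a quantitative decay rate for $\P(\sigma(m)\geq\epsilon m)$ that the paper does not need. Either way the key point is the same: the hypothesis $\tau_n=O(n^{-\alpha})$ makes the per-block exceedance probabilities summable.
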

\begin{proof}
It is elementary to show that 
\begin{eqnarray}
\label{easy}
2^{-r}\tau_{r(n+1)}^m\leq\P\left( \sum_{j=rn}^{r(n+1)-1}\xi_j \geq m\right)\leq2^r\tau_{rn}^m.
\end{eqnarray}
(We majorize or minorize the $\xi_j$ by i.i.d. Bernoulli variables and use the Binomial Theorem.) Then if $\tau_n= O(n^{-\alpha})$, let $K>0$ and fix $m,r\in\N$ such that $m\alpha>1$ and $r>mK$; the probabilities above are then summable, so the first Borel-Cantelli Lemma implies that on a set $\Omega_K$ of probability 1 in $\Omega$, there exists an $M_\omega$ such that for all $n\geq M_\omega$, $\sum_{j=rn}^{r(n+1)-1}\xi_j < m<\frac{r}{K}$; then it is clear that $\{n:\xi_n=1\}$ has Banach density less than $3K^{-1}$.  Let $\Omega'=\bigcap_K \Omega_K$; then $\P(\Omega')=1$ and $\{n:\xi_n=1\}$ has Banach density 0 on $\Omega'$.
\\
\\ For the other implication, note that if $\tau_n\neq O(n^{-1/R})$, there exists a sequence $n_k$ with $n_{k+1}\geq2n_k$ such that $\tau_{n_k}\geq n_k^{-1/R}$; then $$\sum_{n=1}^\infty \tau_{Rn}^R\geq R^{-1}\sum_{n=2}^\infty \tau_n^R\geq R^{-1}\sum_{k=2}^\infty(n_k-n_{k-1})\tau_{n_k}^R\geq R^{-1}\sum_{k=2}^\infty \frac12=\infty.$$
Thus the probabilities in (\ref{easy}) are not summable in $n$, for $m=r=R$.  Since the variables $\xi_n$ are independent, the second Borel-Cantelli Lemma implies that there is a set $\tilde\Omega_R$ of probability 1 on which $\{n:\xi_n(\omega)=1\}$ contains infinitely many blocks of $R$ consecutive integers.  Therefore if $\tau(n)\neq O(n^{-\alpha})$ for every $\alpha>0$, let $\tilde\Omega'=\bigcap_R \tilde\Omega_R$; on this set of probability 1, $\{n:\xi_n=1\}$ has Banach density 1.
\end{proof}
\noindent The author thanks his dissertation advisor, M. Christ, for consultation and assistance throughout the composition of this paper, and M. Wierdl and J. Rosenblatt for many comments and suggestions.

\end{document}